\title[Red-and-black game]{A new class of probabilities in the $N$-person red-and-black game}
\author{W\l{}odzimierz Fechner}
\author{Maria S\l{}omian}
\address{Institute of Mathematics, Lodz University of Technology, al. Politechniki 8, 93-590 \L\'od\'z, Poland}
\email{wlodzimierz.fechner@p.lodz.pl}
\email{marysia.slomian@gmail.com}
\newtheorem*{thm}{Theorem}
\newtheorem{cor}{Corollary}
\newtheorem{prop}{Proposition}
\theoremstyle{remark}
\newtheorem{rem}{Remark}
\newtheorem{ex}{Example}
\theoremstyle{definition}
\newtheorem*{df}{Definition}
\newcommand{\R}{\mathbb{R}}
\newcommand{\f}{\varphi}
\renewcommand{\(}{\left(} \renewcommand{\)}{\right)}
\keywords{red-and-black game; stochastic game; bold strategy; Nash equilibrium; functional inequality}
\subjclass[2020]{Primary: 91A15. Secondary: 39B62, 91A06, 91A10, 91A60}
\begin{document}

\begin{abstract}
We discuss a model of a $N$-person, non-cooperative stochastic game, inspired by the discrete version of the red-and-black gambling problem introduced by Dubins and Savage in 1965. Our main theorem generalizes a result of Pontiggia from 2007 which provides conditions upon which bold strategies for all players form a Nash equilibrium. Our tool is a functional inequality introduced and discussed in the present paper. It allows us to avoid rather restrictive assumptions of super-multiplicativity and super-additivity, which appear in Pontiggia's and other authors' works. We terminate the paper with some examples which in particular show that our approach leads to a larger class of probability functions than existed in the literature so far.
\end{abstract}

\maketitle 

\section{Introduction}

The red-and-black game problem goes back to 1965 when Lester E. Dubins and Leonard J. Savage wrote their seminal book \textit{How to gamble if you must} \cite{DS}. They presented a problem, in which a player wanted to gain a certain amount of money, and any other lower prize was not sufficient for their purpose. Having some initial capital and a goal, the authors answer how to play - not whether to play at all - to increase the chances of winning. 

In 2005 L. Pontiggia in \cite{P1} presented two-person game models, where both players aim to get all of the opponent's money. Both models were based on winning probabilities - in the first one, they were proportional to the player's bets, in the second one, they were estimated with additional weight parameter $\omega$. In 2006 M. Chen and S. Hsiau wrote paper \cite{Ch1}, where they introduced the winning probability function $f$ as a function of one variable. They got two results concerning the best strategy for a player, when their opponent's strategy was known and when $f$ has certain properties. What is more, Chen and Hsiau gave a counterexample for Pontiggia's assumption for the $N$-person model, $N \geq 2$ which was shown in \cite{P1}. A year later, in 2007 L. Pontiggia in \cite{P2} presented a new model for $N$-person game, including a gambling house, which on every step of the game had a positive probability of winning all the players' bets. In her model optimal strategy for all participants is to play boldly. 

In this paper, we will explore further the last discussed game, namely the $N$-person game, where the winning sum is non-constant. Based also on \cite{W}, where a generalization for results of \cite{Ch1} has been given, we will show a less restrictive model. Then, we present examples of the functions that meet the assumptions of our main theorem and thus we show that we extend the main result of \cite{P2}.

In our model, we assume that $N$ players have some initial fortunes equal to positive integers and they simultaneously bet some integer parts of their capitals. With some probability depending upon all their bets either one of the players wins the whole pool, or it is taken over by the casino. We consider bold strategy, i.e. the strategy when a player bets all his fortune and in our main result we provide
conditions upon which the profile of bold strategies for all the players form a Nash equilibrium.
 

\section{Rules of the game}

Let $N\geq 2$ be a fixed integer, which denotes the number of players in the game. Next, assume that $(x_1^0, \ldots x_N^0)$ is a vector of positive integers, where $x_j^0$ denotes the initial fortune of $j$-th player. Put $M := \sum_{i=1}^N x_i^0$ (the total amount of money at the beginning of the game) and let $G$ be a positive integer equal to a fixed goal the players aim to reach. We assume that the goal is the same for all players, only one player can win and at least some players have a chance to win. Therefore, we impose the following double inequality:
\begin{equation}
G \leq M < 2G.
\label{MG}
\end{equation}
Denote $S:=\{0, 1, \ldots , M\}$.
We define the state space for the game as
$$ P := \{(x_1, \ldots, x_N):   x_j \in S, j=1, \ldots, N , \, \sum_{i=1}^N x_i \leq M\}.$$
The absorbing states of the game consist of all vectors with one of the coordinates greater or equal to $G$ (when one of the players wins) and also of all vectors for which $\sum_{i=1}^N x_i<G$ (when it is no longer possible to win by any of the players). 

Now, we define an action set of Player $j$ when current fortunes of the players are equal to $(x_1, \ldots, x_j, \ldots , x_N)$:
$$A_j(x_1, \ldots, x_N) := \begin{cases} 
       \{1, \ldots, x_j\}, & \text{if } x_j \in \{1, \ldots, G-1\}, \\
      \{0\}, & \text{if } x_j \in \{0, G\},
   \end{cases}$$
and his payoff function:
\begin{equation}
W_j(x_1, \ldots, x_N) := \begin{cases} 
       1, & \text{if } x_j \geq G, \\
      0, & \text{if } x_j < G. 
   \end{cases}
\label{payoff}
\end{equation}

Note that the game is non-cooperative and each player has no knowledge of the actions simultaneously taken by the others.

\medskip

Now, assume that we are given a function $\Phi\colon S \times P \to [0, 1]$, which represents the probability of winning for the players. More precisely, if bets of the players are equal to $(a_1, \ldots, a_N)$, then the number  $\Phi(a_j; a_1, \ldots, a_N)$ is the probability of victory of Player $j$ (whose bet is $a_j$). 
A special case, 
\begin{equation}
\Phi(a_j; a_1, \dots , a_N) = f\( \frac{a_j}{a_1+ \dots + a_N}\),
\label{fu}
\end{equation}
 where $f$ is super-additive and super-multiplicative, was studied in \cite{P2}. 
As it is stated in \cite{P2}*{Remark 3.1}, the idea of introducing function $f$ is to penalize the players by reducing their probability of winning (for example by the government or by the owner of the casino). Therefore, our model allows us to punish the players more flexibly, not necessarily depending only upon the quotient of the bid of $j$-th player and the sum of all the bids made during this stage of the game.

We impose another assumption upon $\Phi$, which is in particular fulfilled by all mappings of the form \eqref{fu}. Namely, we will assume that
\begin{equation}\label{ass}
\textrm{If } \sum_{i=1}^n a_i = \sum_{i=1}^n b_i \textrm{ and } a_j=b_j, \textrm{ then } \Phi(a_j; a_1, \dots , a_N) = \Phi(b_j; b_1, \dots , b_N).
\end{equation}

The next condition guarantees that the total probability does not exceed one:
\begin{equation}\label{1}
    \sum_{i=1}^N \Phi(a_i; a_1, \ldots, a_N) \leq 1.
\end{equation}
For technical reasons we will need another natural condition that the probability of victory of a given player is equal to zero if his bet is equal to zero (which, according to the rules of the game, is possible only if he lost all his capital at an earlier stage of the game):
\begin{equation}
\label{00}
\Phi(a_j; a_1, \ldots, a_N) =0 \quad \textrm{if} \quad a_j = 0.
\end{equation}

Now, we are at the point of defining precisely the game's law of motion. Let us fix a positive integer $m$, a current stage of the game. Let $X_{m,j}$ be a random variable that is equal to the fortune of $j$-th player at time $m$. By $a_{m,j}$ we denote an amount which he bids at this stage of the game. By the casino rules $1 \leq a_{m,j} \leq X_{m,j}$ and $X_{1,j}<G$ for $j= 1, 2, \ldots, N$. The law of motion is:
\begin{equation}\label{r1}
     X_{1,1} = x_1^0, \qquad X_{1,2} = x_2^0, \qquad \ldots \qquad X_{1,N} = x_N^0  
\end{equation}
\begin{multline}\label{r2}
    (X_{m+1, 1}, \ldots, X_{m+1, N}) =\\ \begin{cases} 
       (X_{m,1} - a_{m, 1}, \ldots, X_{m,N} - a_{m, N}) , \qquad \text{w.p. } 1 - \sum_{i=1}^N \Phi(a_i; a_1, \ldots, a_{N}), \\
       (X_{m,1} - a_{m, 1}, \ldots, X_{m,j} + \sum_{i\neq j} a_{m, i}, \ldots, X_{m,N} - a_{m, N}) , \qquad \text{w.p. } \Phi(a_j; a_1, \ldots, a_{N})
   \end{cases}
\end{multline}
(here ''w.p.'' is an abbreviation of ''with probability'').

Note that inequality \eqref{1} together with the rules \eqref{r1}, \eqref{r2} implies that
\begin{equation}\label{r3}
    \sum_{i=1}^N X_{m+1, i} \leq \sum_{i=1}^N X_{m, i}.  
\end{equation}
Sometimes we will omit double subscripts when it is clear which stage of the game is considered.

\section{The main result}

Our main result is a generalization of a theorem of L. Pontiggia \cite{P2}*{Theorem 3.1}. We replace assumptions of super-additivity and super-multiplicativity with a less restrictive functional inequality, which will be then discussed in the next section.

According to the definition of payoff function \eqref{payoff}, 
the players want to choose a strategy that maximizes the probability of reaching the goal $G$. We provide conditions upon which bold strategies form a Nash equilibrium.

\begin{df}
A strategy of Player $j$ is called \emph{bold} if for every $m = 1, 2, \dots $ one has $ a_{m, j} = X_{m, j}$. A strategy of Player $j$ is called \emph{timid} if for every $m = 1, 2, \dots $ such that $X_{m, j}\geq 1$  one has $ a_{m, j} = 1$.
\end{df}

\begin{thm}\label{t1}
We consider an $N$-person red-and-black game with $N \geq 2$ and with the law of motion described by formulas \eqref{r1} and \eqref{r2}, with probability function $\Phi\colon S\times P \to [0, 1]$ satisfying conditions \eqref{ass} and \eqref{1}.

Assume that for every $j \in\{1, \dots , N\}$ and every choice of $X_{m,1}, \dots , X_{m, N}\in S$ such that $X_{m,1}+ \dots + X_{m, N}\geq G$ functions $f, g \colon S \to [0,1]$ given by
\begin{align}
f(x) &:= \Phi(x;X_{m, 1}, \ldots, X_{m, j-1}, x , X_{m, j+1}, \ldots, X_{m, N}), \quad x \in S,
\label{f}\\
g(x) &:= \sum_{i \neq j} \Phi(x;X_{m, 1}, \ldots, X_{m, i-1}, x , X_{m, i+1}, \ldots, X_{m, N}), \quad x \in S,
\label{g}
\end{align}
satisfy functional inequality
\begin{equation}
f(x) - f(a) \geq g(a)f(x-a),
\label{main}
\end{equation}
for all $a, x \in S$ such that $a \leq x$. Then, a Nash equilibrium for all players is to play boldly.
\end{thm}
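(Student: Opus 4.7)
My plan is to fix a player $j$ and verify that, provided every other player uses the bold strategy, the best response of $j$ is also bold. I will proceed by strong induction on $j$'s fortune $x_j$; the cases $x_j=0$ and $x_j\ge G$ are trivial (the action set is $\{0\}$), so assume $1\le x_j\le G-1$ and that the claim holds for every state in which $j$'s fortune is strictly smaller than $x_j$.

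A key preliminary is that whenever the state $s=(x_1,\dots,x_N)$ satisfies $M':=\sum_i x_i\ge G$ and all players bet their full fortunes, the game ends after one round. Indeed, by~\eqref{r2} the total is preserved when somebody wins the round, so the winner ends up with $M'\ge G$ and wins the game; otherwise every fortune drops to $0$ and the game ends without a winner. Consequently the value for $j$ under universal bold play at $s$ equals $F_j(x_1,\dots,x_N)=f(x_j)$, where $f$ is defined as in~\eqref{f}. It therefore suffices to show that for every deviation $a\in\{1,\dots,x_j-1\}$ (with bold play by $j$ afterwards and by the others throughout), the resulting value $U_j(s,a)$ does not exceed $f(x_j)$.

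To compute $U_j(s,a)$ I would enumerate the three possible outcomes of the deviation round. (i) With probability $f(a)$ player $j$ wins and ends at $M'\ge G$, contributing $1$. (ii) With probability $F_k(\dots,a,\dots)$ some opponent $k$ wins, producing the state $s^{(k)}$ in which $j$ holds $x_j-a$, $k$ holds $M'-(x_j-a)$, and every other player holds $0$; either $k$'s new fortune already exceeds $G$ (contribution $0$), or the game continues from $s^{(k)}$, where by the inductive hypothesis (applicable since $j$'s fortune has strictly decreased) the continuation value equals $F_j(s^{(k)})$. (iii) With probability $1-f(a)-g(a)$ nobody wins and $j$ is left alone with total $x_j-a<G$, contributing $0$. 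Altogether
\begin{equation*}
U_j(s,a)\;=\;f(a)+\sum_{k\ne j}F_k(\dots,a,\dots)\cdot V_j(s^{(k)}),
\end{equation*}
with $V_j(s^{(k)})$ equal to $0$ or to $F_j(s^{(k)})$.

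Inequality~\eqref{main} at $s$ yields $f(x_j)\ge f(a)+g(a)f(x_j-a)$, so the argument is complete as soon as $V_j(s^{(k)})\le f(x_j-a)$ for every $k\ne j$. The cases $M'-(x_j-a)\ge G$ are immediate, while in the remaining two-player case this should follow from a second application of~\eqref{main} at the reduced state $s^{(k)}$, where condition~\eqref{00} annihilates the contributions of the idle players, combined with the inductive hypothesis. The main difficulty lies precisely in this last comparison: the opponent configuration at $s^{(k)}$ differs entirely from that at $s$, so $F_j(s^{(k)})$ is not literally $f(x_j-a)$, and the bound must be pieced together by chaining~\eqref{main} across nested states. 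Exactly this flexibility is what the functional inequality~\eqref{main} is designed to provide, in contrast to the more rigid super-additivity and super-multiplicativity conditions used in previous work.
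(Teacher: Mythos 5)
Your overall strategy --- fix $j$, let the others play boldly, observe that universal bold play ends the game in one round with value $f(x_j)$, and compare this with the value of betting $a<x_j$ followed by bold play --- is the same reduction the paper performs (the paper phrases it as excessivity of the bold-play return $Q_j$ and invokes Theorem 3.3.10 of Maitra--Sudderth where you use induction on $x_j$, but the computation of $U_j(s,a)$ and the appeal to \eqref{main} are identical). However, your argument is not complete, and the missing step is exactly the one you flag at the end. To conclude from $f(x_j)\ge f(a)+g(a)f(x_j-a)$ you need $V_j(s^{(k)})\le f(x_j-a)$ for every opponent $k$, where $V_j(s^{(k)})=F_j(s^{(k)})$ is $F_j$ evaluated at the state in which every opponent except $k$ holds $0$ and $k$ holds $M'-(x_j-a)$; by contrast, $f(x_j-a)=F_j(X_1,\dots,X_{j-1},x_j-a,X_{j+1},\dots,X_N)$ is $F_j$ evaluated at the original opponent profile. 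The hypothesis of the theorem constrains, for each fixed opponent profile separately, only the dependence of $F_j$ on the $j$-th coordinate; it never compares values of $F_j$ across two different opponent profiles. Applying \eqref{main} at the reduced state $s^{(k)}$, as you suggest, only produces inequalities among values of the section of $F_j$ at that new profile (and, with \eqref{00}, the instance with zero deviation is vacuous), so no amount of ``chaining'' of \eqref{main} closes the gap. As written, the bound $F_j(s^{(k)})\le f(x_j-a)$ is an unproved assertion, and it is not a consequence of the stated hypotheses for a general $F_j$.

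For what it is worth, this is also the thinnest point of the paper's own proof: there the continuation value $Q_j(a_j+\sum_{i\ne j}X_i,0,\dots,X_j-a_j,\dots,0)$ is simply declared to equal $F_j(X_1,\dots,X_j-a_j,\dots,X_N)$, i.e.\ precisely the identification you could not justify is asserted without argument. So you have correctly located the crux; but ``this should follow from a second application of \eqref{main}'' is not a proof, and to make the argument rigorous one needs either an additional hypothesis relating $F_j$ at the post-round states (all but one opponent bankrupt) to the section $f$ at the original state, or a version of \eqref{main} that quantifies over both profiles simultaneously. Your inductive framework is otherwise sound --- under the strategies considered the game terminates in finitely many rounds, so backward induction on $j$'s fortune is a legitimate substitute for the excessivity theorem --- but the theorem does not follow until that one comparison is supplied.
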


\begin{rem}
Functions $f$ and $g$ spoken of in Theorem \ref{t1} depend on $j$ and moreover can be different for different choices of of $X_{m,1}, \dots , X_{m, N}\in S$. However, the functional inequality itself remains the same. For this reason and for the sake of simplicity we have not introduced any additional subscript for $f$ and $g$.  
\end{rem}

\begin{rem}
A special case of the above theorem is when $N=2$ and with no possibility of winning by the casino. In this situation condition \eqref{1} simplifies, which joined with inequality \eqref{main} applied for functions $f$ and $g$ for both players, after an easy calculation leads to the same functional inequality that appeared in \cite{W} in a bit different situation, where bold strategy was shown to be the best response to the timid strategy (\cite{W}*{Theorem 1 and inequality (7) therein}).
\end{rem}

\noindent
\emph{Proof of the Theorem.}
We will follow the idea of the proof of \cite{P2}*{Theorem 3.1}.

Fix $j \in\{1, \dots , N\}$ and assume that all players play boldly. Denote
\begin{equation}
    Q_j(X_1, \ldots, X_N) = \mathbb{P}[\text{Player }j\text{ reaches } G, \text{when the game starts at } (X_1, \ldots X_N)].
\end{equation}
Law of motion of the game at time $m$ is
\begin{multline}\label{r4}
    (X_{m+1, 1}, \ldots, X_{m+1,j} ,\dots , X_{m+1, N}) = \\ \begin{cases} 
       (0, \ldots, 0) , & \text{w.p. } 1 - \sum_{i=1}^N \Phi(X_{m, i}; X_{m,1}, \ldots, X_{m,N}), \\
       (0, \ldots, 0, \sum_{i=1}^N X_{m, i}, 0, \ldots, 0) , & \text{w.p. } \Phi(X_{m, j}; X_{m,1}, \ldots, X_{m,N}).
   \end{cases}
\end{multline}
One can see that when all players adopt bold strategies, then the game terminates after the first round, i.e. $m=1$. Clearly, since all the players bet their entire fortunes, then either one of them wins and thus reaches his goal, or all players go bankrupt and the casino collects their bets. Moreover, since $X_{1,1}+ \dots + X_{1, N}\geq G$, then obviously 
$$ Q_j(0, \dots , 0, \sum_{i =1}^N X_{1,i}, 0, \ldots, 0)=1$$ (Player $j$ won after first round).
Thus we see that the expected return to Player $j$ equals to
\begin{align}
Q_j(X_{1,1}, \ldots, X_{1,N}) &= \Phi(X_{1, j}; X_{1, 1}, \ldots, X_{1, N})\cdot Q_j(0, \ldots, \sum_{i=1}^N X_{1, i}, \ldots, 0)\nonumber  \\&= \Phi(X_{1, j}; X_{1, 1}, \ldots, X_{1, N}). \label{dupa}
\end{align} 
Observe also that
$$ Q_j(0, \ldots, 0, X_{1, j} - a_{1, j}, 0, \ldots, 0) = 0,$$
since $\sum_{i=1}^N X_{1, i}\geq G$ and
$X_{1, j} - a_{1, j} < G$, so Player $j$ will not be able to increase his fortune and, as a consequence to reach his goal $G$.

To complete the proof we will show that $Q_j$ is excessive. Then \cite{MS}*{Theorem 3.3.10} implies that a bold strategy is optimal for Player $j$ if all remaining players play boldly.
Therefore, we need to prove that if at the first stage of the game Player $j$ bets an amount $a_{1,j}<X_{1,j}$, i.e. less than his entire fortune, and then plays boldly for the rest of the game (if the game lasts till second round), then the expected return for him is not greater than his expected return would be if he played boldly at the first stage as well.  
Denote the expected return for Player $j$ who adopts this strategy by $\sigma_j(X_{1, 1}, \ldots, X_{1, j}, \ldots, X_{1, N})$.
Staking an amount $1\leq a_{1,j}\leq X_{1,j}$ means for him that 

\begin{align*}
    \sigma_j(X&_{1, 1}, \ldots, X_{1, j}, \ldots, X_{1, N}) \\ &=
    \Phi(X_{1, 1}; X_{1, 1}, \ldots, a_{1, j}, \ldots, X_{1, N})\cdot Q_j(  \sum_{i\neq j} X_{1, i}+ a_{1, j}, 0, \ldots, X_{1, j} - a_{1, j}, \ldots, 0) \\ &+ \ldots + \Phi(a_{1, j}; X_{1, 1}, \ldots, a_{1, j}, \ldots, X_{1, N})\cdot Q_j(0, \ldots, \sum_{i=1}^N X_{1, i}, \ldots, 0) + \ldots + \\&+ \Phi(X_{1, N}; X_{1, 1}, \ldots, a_{1, j}, \ldots, X_{1, N})\cdot Q_j(0, \ldots, X_{1, j} - a_{1, j}, \ldots, 0, \sum_{i\neq j} X_{1, i}+a_{1, j} ) \\&+ \bigg[1- \sum_{i\neq j} \Phi(X_{1, i}; X_{1, 1}, \ldots, a_{1, j}, \ldots, X_{1, N})  - \Phi(a_{1, j}; X_{1, 1}, \ldots, a_{1, j}, \ldots, X_{1, N})  \bigg]\\&\quad \cdot Q_j(0, \ldots, X_{1, j} - a_{1, j}, \ldots, 0).
    \end{align*}
In the above calculations, we have counted all the possibilities of winning by each player different from $j$ (in this case Player $j$ continues to play with fortune $X_{1, j} - a_{1, j}$), the case when Player $j$ wins (which is represented by the middle element), as well as casino rakes in the stake (the last term).	
		
To proceed we need to focus on the case where only two players remain in the game. By \eqref{ass} and using \eqref{dupa}, we have
\begin{align*}
    Q_j( a_{1, j} + \sum_{i\neq j} X_{1, i}, 0,& \ldots, X_{1, j} - a_{1, j}, \ldots, 0) \\&= \ldots =  Q_j(0, \ldots, X_{1, j} - a_{1, j}, \ldots, 0,  \sum_{i\neq j} X_{1, i} +a_{1, j}) 
		\\&= \Phi(X_{1, j}-a_{1,j}; \sum_{i\neq j} X_{1, i}+a_{1, j}, 0 ,\ldots, 0, X_{1, j} - a_{1, j}, 0 \ldots, 0).
    \end{align*}
\noindent
Finally, joining the above estimates, we arrive at
\begin{align*}
    \sigma(X_{1, 1}, &\ldots, X_{1, j}, \ldots, X_{1, N}) =
    \Phi(a_{1, j}; X_{1, 1}, \ldots, a_{1, j}, \ldots, X_{1, N})\cdot 1 \\&+ \sum_{i \neq j} \Phi(X_{1, i}; X_{1, 1}, \ldots, a_{1, j}, \ldots, X_{1, N})\cdot \Phi(X_{1, j}-a_{1,j}; \sum_{i\neq j} X_{1, i}+a_{1, j}, 0 ,\ldots, 0, X_{1, j} - a_{1, j}, 0 \ldots, 0).
\end{align*}

To show that $Q_j$ is excessive we need to verify the inequality
$$ Q_j(X_{1, 1}, \ldots, X_{1, N}) \geq \sigma_j(X_{1, 1}, \ldots, X_{1, N}).
$$
Note that it is equivalent to
\begin{align*}
\Phi(X_{1, j}; &X_{1, 1}, \ldots, X_{1, N}) \geq \Phi(a_{1, j}; X_{1, 1}, \ldots, a_{1, j}, \ldots, X_{1, N}) \\&+ \sum_{i \neq j} \Phi(X_{1, i}; X_{1, 1}, \ldots, a_{1, j}, \ldots, X_{1, N})\cdot \Phi(X_{1, j}-a_{1,j}; \sum_{i\neq j} X_{1, i}+a_{1, j}, 0 ,\ldots, 0, X_{1, j} - a_{1, j}, 0 \ldots, 0).
\end{align*}
Now, introduce functions $f, g$ as in the statement of the theorem, i.e.
$$ f(x_{1, j}) = \Phi(x_{1, j}; X_{1, 1}, \ldots, X_{1, j-1}, x_{1, j}, X_{1, j+1}, \ldots, X_{1, N})$$
and
$$g(a_{1, j}) = \sum_{i \neq j} \Phi(X_{1, i}; X_{1, 1}, \ldots, X_{1, j-1}, a_{1, j}, X_{1, j+1}, \ldots, X_{1, N}).$$
The inequality in question takes the form
$$ f(x_{1, j})- f(a_{1, j}) \geq g(a_{1, j}) \cdot f(x_{1, j} - a_{1, j}). $$
Note also that $f(0)=0$ by \eqref{00}.
Therefore, we reduced the problem to the inequality \eqref{main} and now the proof is completed. 
\qed

\section{Functional inequality}

We will begin this section with a fundamental observation about solutions to inequality \eqref{main}.

\begin{prop}\label{p1}
Assume that we are given two functions $f, g \colon S \to [0,1]$ and $f$ is positive on $S$. Then $f, g$ satisfy inequality \eqref{main} for all $a, x \in S$ such that $a \leq x$  if and only if
\begin{equation}
g(y) \leq \min \left\{ \frac{f(x) - f(y)}{f(x-y)} : x \in \{ y+1, \ldots , M \} \right\}, \quad y \in S.
\label{min}
\end{equation}
\end{prop}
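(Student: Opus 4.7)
The strategy is a direct rearrangement. Since $f$ is positive on $S$, whenever $x > y$ the value $f(x-y)$ is strictly positive, so the scalar inequality $f(x) - f(y) \geq g(y)f(x-y)$ is equivalent to $g(y) \leq (f(x) - f(y))/f(x-y)$. The proposition then reduces to a routine quantifier manipulation between the two forms.

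For the forward direction I would fix $y \in S$ with $y < M$, substitute $a = y$ in \eqref{main}, and let $x$ range over $\{y+1, \ldots, M\}$. Each resulting instance, after dividing by $f(x-y) > 0$, gives $g(y) \leq (f(x) - f(y))/f(x-y)$. Since the set $\{y+1, \ldots, M\}$ is finite, $g(y)$ is bounded above by the minimum over $x$, which is precisely \eqref{min}.

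For the reverse direction I would take arbitrary $a, x \in S$ with $a \leq x$. In the principal case $a < x$ we have $x \in \{a+1, \ldots, M\}$, so \eqref{min} applied at $y = a$ yields $g(a) \leq (f(x) - f(a))/f(x-a)$; multiplying through by the positive quantity $f(x-a)$ recovers \eqref{main}. The degenerate case $a = x$ reduces \eqref{main} to $0 \geq g(a) f(0)$, which is handled separately (and, by convention, the case $y = M$ in \eqref{min} is vacuous as the minimum is taken over an empty set).

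I anticipate no substantive obstacle; the proposition is essentially the observation that \eqref{main}, read as an upper bound on $g(a)$ parametrised by $x$, collapses into a single sharpest bound given by the minimum over $x$. The only care required is to invoke the positivity of $f$ to legitimise division by $f(x-y)$, and to note that the finiteness of $S$ allows one to replace the infimum by a minimum.
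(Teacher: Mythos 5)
Your argument for the off-diagonal case is exactly the paper's intended one (the paper dismisses the whole proof as ``Straightforward''), and there it is correct: for $a < x$ the positivity of $f(x-a)$ lets you pass freely between \eqref{main} and the bound $g(a) \leq (f(x)-f(a))/f(x-a)$, and the finiteness of $\{a+1, \dots, M\}$ turns this family of upper bounds into the single sharpest bound \eqref{min}.

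The gap is the diagonal case $a = x$, which you flag but do not actually handle, and which cannot be handled under the hypotheses as literally stated. There \eqref{main} reads $0 \geq g(a)f(0)$; since $f$ is assumed positive on all of $S$ (so $f(0) > 0$) and $g$ takes values in $[0,1]$, this forces $g(a) = 0$ for every $a \in S$. Condition \eqref{min} does not imply this: for a strictly increasing $f$ all the quotients $(f(x)-f(y))/f(x-y)$ are strictly positive, so \eqref{min} tolerates $g > 0$, and hence the ``if'' direction of the equivalence fails on the diagonal. (The case $y = M$, where the minimum in \eqref{min} runs over the empty set and so constrains nothing, exhibits the same failure in isolation.) To close the argument you must either read ``for all $a \leq x$'' in \eqref{main} as ``for all $a < x$'', or replace ``$f$ positive on $S$'' by ``$f$ positive on $S \setminus \{0\}$ and $f(0) = 0$'', which is the situation actually arising in the game by \eqref{00}; under either reading the diagonal case becomes vacuous or automatic and your proof is complete. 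As written, ``handled separately'' conceals the only point of the proposition that genuinely needs care.
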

\begin{proof}
Straightforward.
\end{proof}

From the above proposition, we have an immediate corollary.

\begin{cor}
Assume that $f \colon S \to (0,1]$ is a non-decreasing function and $g \colon S \to [0,1]$ is defined by the formula
\begin{equation}
g(y) = \min \left\{ \frac{f(x) - f(y)}{f(x-y)} : x \in \{ y+1, \ldots , M \} \right\}, \quad y \in S.
\label{g_min}
\end{equation}
Then $f, g$ satisfy inequality \eqref{main} for all $a, x \in S$ such that $a \leq x$.
\end{cor}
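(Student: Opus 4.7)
The plan is to derive the corollary as an immediate consequence of Proposition \ref{p1}: the function $g$ is defined precisely so as to attain, pointwise, the largest value compatible with the bound \eqref{min}, so the conclusion will follow by a direct appeal to that proposition.

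First, I would check that the formula really does define a map $g \colon S \to [0,1]$. The denominators $f(x-y)$ are strictly positive by the assumption that $f$ takes values in $(0,1]$, and each numerator $f(x)-f(y)$ with $x \geq y+1$ is non-negative because $f$ is non-decreasing; hence each quotient lies in $[0,\infty)$ and so does their minimum, giving $g(y) \geq 0$. The upper bound $g(y) \leq 1$ is part of the way the corollary is phrased (writing $g \colon S \to [0,1]$); in concrete applications it is verified by exhibiting, for each $y$, at least one $x \in \{y+1,\dots,M\}$ for which $f(x)-f(y) \leq f(x-y)$.

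Second, by the very definition of $g$ as a pointwise minimum, the inequality
$$g(y) \leq \frac{f(x)-f(y)}{f(x-y)}, \qquad x \in \{y+1,\dots,M\},$$
holds for every $y \in S$, in fact with equality being attained for at least one $x$. This is exactly condition \eqref{min} of Proposition \ref{p1}. Applying the ``if'' direction of that proposition immediately yields inequality \eqref{main} for every pair $a,x \in S$ with $a \leq x$, which is the asserted conclusion.

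I do not anticipate any real obstacle, since the corollary is essentially a re-packaging of Proposition \ref{p1} for the specific choice $g(y)=\min\{\dots\}$; the one mild verification — that $g$ is non-negative — is ensured by the monotonicity hypothesis on $f$. The content worth emphasising is that the $g$ supplied by \eqref{g_min} is the \emph{largest} function making \eqref{main} hold for the given $f$, so the corollary can be viewed as producing an extremal pair $(f,g)$ solving the functional inequality that drives Theorem \ref{t1}.
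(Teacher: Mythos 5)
Your argument is correct and is exactly the route the paper intends: the corollary is stated there as an immediate consequence of Proposition \ref{p1}, and your observation that the pointwise minimum in \eqref{g_min} makes condition \eqref{min} hold by definition (with non-negativity of $g$ supplied by the monotonicity of $f$, and the bound $g\leq 1$ absorbed into the statement) is precisely that deduction. The paper gives no further detail, so nothing is missing from your proposal.
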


One can ask about solutions of a corresponding functional equation, when one replace inequality sign in \eqref{main} by equality, i.e.
\begin{equation}
f(x) - f(a) = g(a)f(x-a),
\label{eq}
\end{equation}
for all $a, x \in S$ such that $a \leq x$. However, it is not difficult to  find all solutions of  \eqref{eq}. Let us note the following observation.

\begin{prop}\label{pf}
Assume that we are given two functions $f, g \colon S \to \R$. Then $f, g$ satisfy functional equation \eqref{eq} for all $a, x \in S$ such that $a \leq x$  if and only if:
\begin{itemize}
	\item[(i)] $g=0$ and $f$ is constant on $S$, or
	\item[(ii)]   $f=0$ and $g$ is arbitrary on $S$, or
	\item[(iii)] $f(x) = f(1)x$ for all $x \in S$ and $g=1$ on $S$, or
	\item[(iv)] $f(x) = \alpha (g(0)^x-1)$ and $g(x) = g(1)^x$ for all $x \in S$ with some constant $\alpha \in \R$.
\end{itemize}
\end{prop}
\begin{proof}
The ''if'' implication is straightforward, thus we will justify the ''only if'' implication. Apply \eqref{eq} with $x=a$ to deduce that $g(a)f(0) = 0$ for all $a \in S$. Thus, either $g=0$ on $S$, or $f(0)=0$. We will discuss the second case. Put $a=0$ in \eqref{eq} to get that either $f=0$ on $S$, or $g(0)=1$. So, assume that $g(0)=1$. Next, observe that if for some $b \in S$, $b>0$ we have $f = 0$ on $\{1, 2, \dots , b\}$, then by \eqref{eq} we obtain $f(b+1)-f(1) = g(1)f(b)=0$, so $f(b+1)=0$. Therefore, we can assume that $f(1)\neq 0$. Without loss of generality assume that $f(1)=1$, by multiplying equation \eqref{eq} by a constant and replacing $f$ by $f(1)^{-1}f$, if necessary. Denote $c:=g(1)$ and use \eqref{eq} to get
$$f(x+1) - 1 = c f(x), \quad x \in S, \, x+1 \in S.$$
A straightforward induction leads to
$$f(y) = c^{y-1}+\dots +1, \quad y \in S, \, y>0.$$
Thus, if $c=1$, then $f(y) = y$ and $f(y) =  \frac{c^y-1}{c-1}$ if $c \neq 1$, which corresponds to cases (iii) and (iv), respectively. 
\end{proof}

\begin{rem}
The first three cases of Proposition \ref{pf} correspond to rather uninteresting possibilities in the game. First (when $g=0$ and $f$ is constant on $S$) is when the casino never wins and each player wins with the probability of $1/N$ regardless of the bets. The second (when $f=0$) is when the casino collects all the bets with probability one. The third option spoken of in Proposition \ref{pf} is not applicable, since if $f(1)\neq 0$, then condition \eqref{1}, which guarantees that the sum of probabilities of winning does not exceed one, is not satisfied. The fourth case, is on the contrary of definite interest since it corresponds to the power functions, which were considered as win probability functions in \cite{Ch1}, see also \cite{P2}*{Example 3.2}. 
\end{rem}

\section{Examples and final remarks}

A calculation on \cite{P2}*{p. 552, before Remarks 3.1} leads us to the following corollary.

\begin{cor}
Assume that $\f\colon [0,1] \to [0,1]$ is a super-multiplicative and super-additive function and $x_1, \dots , x_N \in S$ are fixed. Then function $\Phi\colon S\times P \to [0, 1]$ given by
$$\Phi(a_j;a_1, \dots a_N ) := \f\(\frac{a_j}{ \sum_{j=1}^Na_j } \)$$
satisfy assumptions of Theorem \ref{t1}.
\end{cor}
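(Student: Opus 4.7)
The plan is to verify the hypotheses of Theorem \ref{t1} directly. The key structural observation is that, since $F_i$ depends only on its $i$-th argument (and on the fixed constants $x_k$), the function $g$ defined by \eqref{g} is \emph{constant} in its argument: writing $\beta := \sum_{k \neq j} x_k$, one has $f(a) = \varphi(a/(a+\beta))$ while $g(a) \equiv C := \sum_{i \neq j} \varphi(X_{m,i}/(X_{m,i}+\sum_{k \neq i} x_k))$. Super-additivity immediately forces $\varphi(0)=0$ and monotonicity of $\varphi$ on $[0,1]$, from which \eqref{00} is trivial, and iterated super-additivity reduces \eqref{1} to the arithmetic bound $\sum_i a_i/(a_i+\sum_{k \neq i} x_k) \leq 1$, which holds on the game-relevant inputs satisfying $a_i \leq x_i$.

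The core work is verifying \eqref{main}. I would begin from the elementary identity
\[ \frac{x}{x+\beta} = \frac{a}{a+\beta} + \frac{\beta(x-a)}{(x+\beta)(a+\beta)} \]
and apply super-additivity of $\varphi$ to the two summands on the right, obtaining $f(x) - f(a) \geq \varphi\bigl(\beta(x-a)/((x+\beta)(a+\beta))\bigr)$. I would then factor the argument as the product $\tfrac{x-a}{x-a+\beta} \cdot \tfrac{\beta(x-a+\beta)}{(x+\beta)(a+\beta)}$ of two numbers in $[0,1]$ (a short calculation confirms both factors lie in the unit interval) and invoke super-multiplicativity to split off an $f(x-a)$ factor:
\[ f(x) - f(a) \geq f(x-a) \cdot \varphi\!\left(\frac{\beta(x-a+\beta)}{(x+\beta)(a+\beta)}\right). \]

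The remaining step, and the most delicate one, is to bound the residual factor from below by $C$. The natural approach is to apply super-additivity once more to the definition of $C$, which yields $C \leq \varphi(\beta/M)$ with $M := \sum_k x_k$, so by monotonicity of $\varphi$ it suffices to compare the algebraic expressions $\beta(x-a+\beta)/((x+\beta)(a+\beta))$ and $\beta/M$. This comparison, which hinges on the arithmetic relationship among $\beta$, $M$, and the bet ratio, is the main obstacle, and is precisely the content of the calculation attributed to \cite{P2}*{p.~552} in the text preceding the corollary. Once this comparison is in hand, inequality \eqref{main} follows and Theorem \ref{t1} applies.
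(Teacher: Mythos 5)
Your setup and the first two steps are sound: the identity $\frac{x}{x+\beta}=\frac{a}{a+\beta}+\frac{\beta(x-a)}{(x+\beta)(a+\beta)}$ plus super-additivity, followed by your factorization plus super-multiplicativity, do yield
\begin{equation*}
f(x)-f(a)\;\geq\; f(x-a)\cdot\f\!\left(\frac{\beta(x-a+\beta)}{(x+\beta)(a+\beta)}\right).
\end{equation*}
Note, however, that the paper gives no proof of this corollary at all --- it only points to a calculation in Pontiggia's paper --- so the decisive issue is whether your final step can be closed, and it cannot, at least not along the route you sketch. The comparison you defer to, $\frac{\beta(x-a+\beta)}{(x+\beta)(a+\beta)}\geq\frac{\beta}{M}$, fails whenever $a$ is close to $x$: with $\beta=10$, $M=20$, $x=10$, $a=9$ the left-hand side is $\frac{110}{380}=\frac{11}{38}<\frac12=\frac{\beta}{M}$. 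Worse, the target inequality \eqref{main} itself fails for these data with $\f=\mathrm{id}$, which is both super-additive and super-multiplicative (it is the case $w=1$ of Pontiggia's Example 3.1): taking $N=2$, $x_1=x_2=10$, $X_{m,2}=10$, one gets $f(10)-f(9)=\frac12-\frac{9}{19}=\frac{1}{38}$, while $g(9)f(1)=\frac12\cdot\frac{1}{11}=\frac{1}{22}>\frac{1}{38}$. So no refinement of the last step can rescue the argument: the corollary as literally stated is not provable without additional hypotheses (a strict casino advantage, or a restriction of \eqref{main} to pairs $(x,a)$ actually reachable in the game with $a$ not too close to $x$).

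A second, smaller gap: your bound $C\leq\f(\beta/M)$ implicitly uses $X_{m,i}\leq x_i$, but Theorem \ref{t1} quantifies over all $X_{m,1},\dots,X_{m,N}\in S$ with $\sum_i X_{m,i}\geq G$, so the $X_{m,i}$ need not be dominated by the fixed $x_i$; the same issue affects your verification of \eqref{1}, which you yourself flag as holding only on ``game-relevant'' inputs. These restrictions are not part of the corollary's hypotheses, so they would have to be added for your argument --- or any argument --- to go through.
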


Two particular examples of function $\f$ which fulfils assumptions of the above corollary are given in  \cite{P2}*{Examples 3.1 and 3.2} as $\f(s) = w s$ with some $w \leq 1$ and $f(s) = s^p$ with $p \geq 1$.

We will terminate the paper with a few simple examples of probability functions that satisfy \eqref{fu} (except for the last one, which describes a situation when assumptions of our model are not satisfied). Then, we will compute corresponding functions $f$ and $g$ and we check whether inequality \eqref{main} is satisfied or not. This will show that our approach is more general than the one presented in the literature on red-and-black gambling. 

\begin{ex}
Assume that function $\Phi$ is defined as
$$ \Phi(a_j; a_1, \ldots , a_N):=\frac{1}{M\cdot N}[a_1+\dots + a_N-a_j]$$
when $a_j>0$ and $\Phi(0; a_1,\dots a_N )=0$. Factor $1/M\cdot N$ guarantees that $\Phi$ is a probability function and condition \eqref{1} holds. Moreover \eqref{fu} is satisfied by $\Phi$. It is easy to check that $f$ and $g$ are constant functions. Thus inequality \eqref{main} is satisfied if and only if $f=0$ or $g=0$.
This corresponds to uninteresting cases of the game (which should be of no surprise, since in this example the chance of winning of Player $j$ does not depend on his bet).
\end{ex}


\begin{ex}
Assume that function $\Phi$ is defined as
$$ \Phi(a_j; a_1, \ldots , a_N):=\frac{a_j}{M}$$
when $a_j>0$ and $\Phi(0; a_1,\dots a_N )=0$. Again, \eqref{fu} is satisfied by $\Phi$, $f$ and $g$ are also easy to find and inequality \eqref{main} is always satisfied. If (and only if) all the players will choose the bold strategy, i.e. $a_j=X_j$, then the sum of all $\Phi$'s equals $1$, which means that the casino has no chance to rack in their bets. Therefore, in this example the bold strategy is dominant.
\end{ex}

\medskip 

Further examples are easy and show that in the context of non-constant sum games with bet-dependent probabilities it is no longer relevant whether the game is sub-fair or super-fair. Namely, it is possible to construct a super-fair game in which a player should adopt a bold strategy and a sub-fair game with a timid strategy being optimal.

\begin{ex}
Assume that for every $j \in\{1, \ldots,  N\}$ and every choice of bets $(a_1, \ldots , a_N)$ we have
$$ \Phi(a_j; a_1, \ldots , a_N):=\frac{1}{N}.$$
In this situation, a timid strategy is always dominant for each player. 

One can modify this example by allowing the casino a positive probability of collecting all the bets, for example as follows:
$$ \Phi(a_j; a_1, \ldots , a_N):=\frac{1}{2N}.$$
Now the situation changes and with the probability of $1/2$ all the bets are taken by the casino. Still, it is unwise for a single player to deviate from a dominant timid strategy and increase his bet. Consequently, with timid strategies of all players the probability that any of them will win decreases as the initial sum is large.

Note however that if the players were allowed to cooperate, then they would adopt bold strategies to increase their expected payoffs. 
\end{ex}

\begin{ex}
Assume that the win probability function for the first player is equal to 
$$ \Phi(a_1, \ldots , a_N) := \left\{  \begin{array}{ll}
	0,  & \textrm{if } a_1 < x_1^0, \\
	1,  & \textrm{if } a_1 \geq x_1^0.
	\end{array}            
\right. $$
Thus, without any additional restrictions upon the remaining probability functions, we see that the bold strategy is a dominant strategy guaranteeing her a sure victory. Here equilibrium is not unique (provided the game lasts more than one turn), since any bet of Player 1 greater than or equal to $x_1^0$ is a dominant strategy. Note that in this example assumptions of our model are violated. Therefore, our conditions are by no means necessary for the bold profile.
\end{ex}

\medskip

\noindent \textbf{Funding.} 
The authors declare that no funds, grants, or other support were received during the preparation of this manuscript.

\medskip

\noindent \textbf{Competing Interests.} 
The authors have no relevant financial or non-financial interests to disclose.

\medskip

\noindent \textbf{Author Contributions.} 
Both authors contributed to the study equally.
Both authors read and approved the final manuscript.

\medskip

\noindent \textbf{Data availability statement.} Not applicable.

%

\begin{bibdiv}
\begin{biblist}

\bib{Ch1}{article}{
   author={Chen, May-Ru},
   author={Hsiau, Shoou-Ren},
   title={Two-person red-and-black games with bet-dependent win probability functions},
   journal={J. Appl. Probab.},
   volume={43},
   date={2006},
   number={4},
   pages={905--915},
   issn={0021-9002},
}

\bib{Ch2}{article}{
   author={Chen, May-Ru},
   author={Hsiau, Shoou-Ren},
   title={Two new models for the two-person red-and-black game},
   journal={J. Appl. Probab.},
   volume={47},
   date={2010},
   number={1},
   pages={97--108},
   issn={0021-9002},
}

\bib{DS}{book}{
   author={Dubins, Lester E.},
   author={Savage, Leonard J.},
   title={How to gamble if you must. Inequalities for stochastic processes},
   publisher={McGraw-Hill Book Co., New York-Toronto-London-Sydney},
   date={1965},
   pages={xiv+249},
}

\bib{W}{article}{
   author={Fechner, W\l odzimierz},
   title={New inequalities for probability functions in the two-person red-and-black game},
   journal={J. Math. Anal. Appl.},
   volume={491},
   date={2020},
   number={2},
   pages={124339},
 
}

\bib{MS}{book}{
   author={Maitra, Ashok P.},
   author={Sudderth, William D.},
   title={Discrete gambling and stochastic games},
   series={Applications of Mathematics (New York)},
   volume={32},
   publisher={Springer-Verlag, New York},
   date={1996},
   pages={xii+244},
   isbn={0-387-94628-4},
}

\bib{P1}{article}{
   author={Pontiggia, Laura},
   title={Two-person red-and-black with bet-dependent win probabilities},
   journal={Adv. in Appl. Probab.},
   volume={37},
   date={2005},
   number={1},
   pages={75--89},
   issn={0001-8678},
}

\bib{P2}{article}{
   author={Pontiggia, Laura},
   title={Nonconstant sum red-and-black games with bet-dependent win probability function},
   journal={J. Appl. Probab.},
   volume={44},
   date={2007},
   number={2},
   pages={547--553},
   issn={0021-9002},
}

\bib{S}{article}{
   author={Secchi, Piercesare},
   title={Two-person red-and-black stochastic games},
   journal={J. Appl. Probab.},
   volume={34},
   date={1997},
   number={1},
   pages={107--126},
   issn={0021-9002},
}

\bib{Y}{article}{
   author={Yao, Yi-Ching},
   author={Chen, May-Ru},
   title={Strong optimality of bold play for discounted Dubins-Savage gambling problems with time-dependent parameters},
   journal={J. Appl. Probab.},
   volume={45},
   date={2008},
   number={2},
   pages={403--416},
   issn={0021-9002},
}

\end{biblist}
\end{bibdiv}

\end{document}